\newtheorem{theorem}{Theorem}
\newtheorem{corollary}[theorem]{Corollary}
\theoremstyle{definition}
\newtheorem{remark}[theorem]{Remark}
\newtheorem{definition}[theorem]{Definition}
\let\phi=\varphi
\def\Ann{\operatorname{Ann}}
\def\Ass{\operatorname{Ass}}
\let\oldbigwedge\bigwedge
\def\BIGwedge{{\textstyle\oldbigwedge}}
\def\medwedge{{\scriptstyle\oldbigwedge}}
\def\bigwedge{\mathchoice{\BIGwedge}{\BIGwedge}{\medwedge}{}}
\let\epsilon=\varepsilon
\begin{document}
\title{Zero-divisors of Semigroup Modules}

\author{Peyman Nasehpour}
\address{Universit\"at Osnabr\"uck, FB Mathematik/Informatik, 49069
Osnabr\"uck, Germany} \email{Peyman.Nasehpour@mathematik.uni-osnabrueck.de} \email{nasehpour@gmail.com}

\begin{abstract}
Let $M$ be an $R$-module and $S$ a semigroup. Our goal is to discuss zero-divisors of the semigroup module $M[S]$. Particularly we show that if $M$ is an $R$-module and $S$ a commutative, cancellative and torsion-free monoid, then the $R[S]$-module $M[S]$ has few zero-divisors of size $n$ if and only if the $R$-module $M$ has few zero-divisors of size $n$ and Property (A).
\end{abstract}

\maketitle

\section{Introduction}

Let $S$ be a commutative semigroup and $M$ be an $R$-module. One can define the semigroup module $M[S]$ as an $R[S]$-module constructed from the semigroup $S$ and the $R$-module $M$ similar to the standard definition of semigroup rings. Obviously similar to semigroup rings, the zero-divisors of the semigroup module $M[S]$ are interesting to investigate (\cite[p. 82]{G1} and \cite{J}).

$ $

We write each element of $g \in M[S]$ as ``polynomials'' $g=m_1X^{s_1}+m_2X^{s_2}+\cdots+m_nX^{s_n}$, where $m_1,\ldots,m_n \in M$ and $s_1,\ldots,s_n$ are distinct elements of $S$ and this representation of $g$ is called the canonical form of $g$. For $g = m_1X^{s_1}+m_2X^{s_2}+\cdots+m_nX^{s_n}$, we define the content of $g$ to be the $R$-submodule of $M$ generated by the coefficients of $g$.

$ $

Northcott gave a nice generalization of \textit{Dedekind-Mertens Lemma} as follows: if $S$ is a commutative, cancellative and torsion-free monoid and $M$ is an $R$-module, then for all $f \in R[S]$ and $g \in M[S]$, there exists a natural number $k$ such that $c(f)^k c(g) = c(f)^{k-1} c(fg)$ (\cite{N2}). Dedekind-Mertens Lemma has different versions with various applications (\cite{AG}, \cite{AK}, \cite{BG}, \cite{GGP}, \cite{HH}, \cite{N1}, \cite{NY}, \cite{OR}, and \cite{R}). One of its interesting consequences is McCoy's Theorem on zero-divisors (\cite[p. 96]{G1} and \cite{M}): If $M$ is a nonzero $R$-module and $S$ is a commutative, cancellative and torsion-free monoid, then for all $f \in R[S]$ and $g \in M[S]-\lbrace0\rbrace$, if $fg = 0$, then there exists an $m \in M-\lbrace 0 \rbrace$ such that $f\cdot m = 0$.

$ $

An $R$-module $M$ is said to have \textit{few zero-divisors of size} $n$, if $Z_R(M)$ is a finite union of $n$ prime ideals $\textbf{p}_1,\ldots,\textbf{p}_n$ of $R$ such that $\textbf{p}_i \nsubseteq \textbf{p}_j$ for all $i \neq j$. Also note that an $R$-module $M$ has \textit{Property (A)}, if each finitely generated ideal $I \subseteq Z_R(M)$ has a nonzero annihilator in $M$. We use McCoy's Theorem to prove that if $M$ is an $R$-module and $S$ a commutative, cancellative and torsion-free monoid, then the $R[S]$-module $M[S]$ has few zero-divisors of size $n$, if and only if the $R$-module $M$ has few zero-divisors of size $n$ and Property (A).

In this paper all rings are commutative with identity and all modules are unital\footnote{2000 Mathematics Subject Classification: 13A15, 13B25, 13C05, 20M14}. Unless otherwise stated, our notation and terminology will follow as closely as possible that of Gilmer \cite{G1}.

\section{Zero-Divisors of Semigroup Modules}

Let us recall that if $R$ is a ring and $f=a_0 + a_1X + \cdots + a_nX^n$ is a polynomial on the ring $R$, then content of $f$ is defined as the $R$-ideal, generated by the coefficients of $f$, i.e. $c(f)= (a_0, a_1, \ldots, a_n)$. The content of an element of a semigroup module is a natural generalization of the content of a polynomial as follows:

\begin{definition}
 Let $M$ be an $R$-module and $S$ be a commutative semigroup. Let $g \in M[S]$ and put $g = m_1X^{s_1}+m_2X^{s_2}+\cdots+m_nX^{s_n}$, where $m_1,\ldots,m_n \in M$ and $s_1,\ldots,s_n \in S$. We define the content of $g$ to be the $R$-submodule of $M$ generated by the coefficients of $g$, i.e. $c(g) = (m_1,\ldots,m_n)$.
\end{definition}

\begin{theorem}
 Let $S$ be a commutative monoid and $M$ be a nonzero $R$-module. Then the following statements are equivalent:
\begin{enumerate}
 \item $S$ is a cancellative and torsion-free monoid.
 \item For all $f \in R[S]$ and $g \in M[S]$, there is a natural number $k$ such that $c(f)^k c(g) = c(f)^{k-1} c(fg)$.
 \item (\textit{McCoy's Property}) For all $f \in R[S]$ and $g \in M[S]-\lbrace0\rbrace$, if $fg = 0$, then there exists an $m \in M-\lbrace 0 \rbrace$ such that $f\cdot m = 0$.
\item For all $f \in R[S]$, $\Ann_M(c(f)) =0$ if and only if $f \notin Z_{R[S]}(M[S])$.

\end{enumerate}

\end{theorem}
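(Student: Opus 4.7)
The plan is to close the cycle $(1) \Rightarrow (2) \Rightarrow (3) \Rightarrow (1)$ and then handle $(3) \Leftrightarrow (4)$ separately. The first arrow $(1) \Rightarrow (2)$ is exactly Northcott's Dedekind--Mertens type theorem for semigroup modules that is already cited in the introduction, so I would simply invoke it rather than reprove it.

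For $(2) \Rightarrow (3)$ I would run the standard content-based McCoy argument. Given $fg=0$ with $g\neq 0$, the hypothesis gives $c(f)^k c(g) = c(f)^{k-1} c(fg) = 0$ for some $k$. Pick the least $j \geq 1$ with $c(f)^j c(g) = 0$ (which exists since $c(g)\neq 0$), choose $0 \neq m \in c(f)^{j-1} c(g)$, and observe that $c(f)\,m = 0$; this is exactly the statement $f\cdot m = 0$ in $M[S]$. The equivalence $(3) \Leftrightarrow (4)$ is essentially bookkeeping: any nonzero element of $\Ann_M(c(f))$, regarded as a constant of $M[S]$, witnesses $f \in Z_{R[S]}(M[S])$, while McCoy's property supplies such an annihilator whenever $f$ is a zero-divisor on $M[S]$.

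The real work lies in $(3) \Rightarrow (1)$, where I have to exhibit concrete failures of McCoy when $S$ misbehaves. Fix $0 \neq m \in M$ once and for all. If $S$ is not cancellative, choose $s_1 \neq s_2$ and $s_3$ with $s_1+s_3 = s_2+s_3$, and take $f = X^{s_1}-X^{s_2}$, $g = mX^{s_3}$: then $fg=0$ while $g\neq 0$, yet any $m'\neq 0$ with $fm'=0$ would force $m'X^{s_1} = m'X^{s_2}$ and hence $s_1=s_2$. If $S$ is cancellative but not torsion-free, pick $s_1\neq s_2$ and choose $n\geq 2$ minimal with $ns_1=ns_2$; then the telescoping identity
\[
(X^{s_1}-X^{s_2})\sum_{k=0}^{n-1} X^{k s_1+(n-1-k) s_2} \;=\; X^{n s_1}-X^{n s_2} \;=\; 0
\]
lets me put $g := m\sum_{k=0}^{n-1} X^{k s_1+(n-1-k) s_2}$ and $f := X^{s_1}-X^{s_2}$, again with no nonzero $m'$ satisfying $fm'=0$.

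The main obstacle, as I see it, is precisely the torsion case: for $g$ to be nonzero in $M[S]$ the $n$ exponents $k s_1 + (n-1-k) s_2$ must be pairwise distinct. This is exactly where the minimality of $n$ is used, since an equality between two such exponents, combined with cancellation, would give a strictly smaller torsion relation $(k-k')s_1 = (k-k')s_2$ with $1 \leq |k-k'| < n$. Once this distinctness is verified, every remaining step, including $(2)\Rightarrow(3)$ and $(3)\Leftrightarrow(4)$, is routine.
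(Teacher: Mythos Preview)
Your proof is correct and follows essentially the same structure as the paper's: invoking Northcott for $(1)\Rightarrow(2)$, the minimal-exponent content argument for $(2)\Rightarrow(3)$, explicit binomial/telescoping counterexamples for the contrapositive of $(3)\Rightarrow(1)$, and a routine $(3)\Leftrightarrow(4)$. The only cosmetic difference is that in your counterexamples you place the binomial $X^{s_1}-X^{s_2}$ in $R[S]$ and the remaining factor in $M[S]$, whereas the paper swaps these roles (taking $f=X^s$ and $g=qX^t-qX^u$ in the non-cancellative case, and $f=\sum X^{(k-i-1)s+it}$ with $g=qX^s-qX^t$ in the torsion case); both assignments work, and the distinctness-of-exponents check via minimality of $n$ is identical.
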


\begin{proof}
$(1) \rightarrow (2)$ has been proved in \cite{N2}.

$ $

For $(2) \rightarrow (3)$, assume that $f \in R[S]$ and $g \in M[S]-\lbrace0\rbrace$, such that $fg = 0$. So there exists a natural number $k$ such that $c(f)^k c(g) = c(f)^{k-1} c(fg)=(0)$. Take $t$ the smallest natural number such that $c(f)^t c(g)=(0)$ and choose $m$ a nonzero element of $c(f)^{t-1}c(g)$. It is easy to check that $f\cdot m=0$.

$ $

For $(3) \rightarrow (1)$, we prove that if $S$ is not cancellative or not torsion-free then (1) cannot hold. For the moment, suppose that $S$ is not cancellative, so there exist $s,t,u \in S$ such that $s+t = s+u$ while $t \not= u$. Put $f = X^s$ and $g = (qX^{t}-qX^u)$, where $q$ is a nonzero element of $M$. Then obviously $fg = 0$, while $f\cdot m\neq 0$ for all $m \in M-\lbrace 0 \rbrace$. Finally suppose that $S$ is cancellative but not torsion-free. Let $s,t \in S$ be such that $s \not=t$, while $ns = nt$ for some natural $n$. Choose the natural number $k$ minimal so that $ks = kt$. Then we have $0 = qX^{ks}-qX^{kt} = (\sum_{i=0}^{k-1} X^{(k-i-1)s+it}) (qX^s-qX^t)$, where $q$ is a nonzero element of $M$.

Since $S$ is cancellative, the choice of $k$ implies that $ (k-i_1-1)s+i_{1}t \not= (k-i_2-1)s+i_{2}t $ for $0 \leq i_1 < i_2 \leq k-1 $.
Therefore $\sum_{i=0}^{k-1} X^{(k-i-1)s+it} \not= 0$, and this completes the proof. $(3) \leftrightarrow (4)$ is obvious.
\end{proof}

\begin{corollary}
 Let $M$ be an $R$-module and $S$ be a commutative, cancellative and torsion-free monoid. Then the following statements hold:
\begin{enumerate}
 \item $R$ is a  domain if and only if $R[S]$ is a domain.
 \item If $\textbf{p}$ is a prime ideal of $R$, then $\textbf{p}[S]$ is a prime ideal of $R[S]$.
 \item If $\textbf{p}$ is in $\Ass_R(M)$, then $\textbf{p}[S]$ is in $\Ass_{R[S]}(M[S])$.
\end{enumerate}

\end{corollary}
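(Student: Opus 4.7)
My plan is to chain the three parts together: prove (1) directly from McCoy's property (item (3) of the theorem) applied to $M=R$, then deduce (2) by quotienting, and finally deduce (3) by a direct annihilator computation that invokes (2).

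For (1), if $R[S]$ is a domain, then $R$ embeds as the constant elements and is therefore a domain. For the converse, assume $R$ is a domain and take $f,g\in R[S]$ with $fg=0$ and $g\neq 0$. Applying McCoy's property to the $R$-module $M=R$ (which is nonzero because $R$ is a domain), I obtain $m\in R-\{0\}$ with $fm=0$. Since $R$ is a domain, this forces $f=0$.

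For (2), given a prime ideal $\mathbf{p}$ of $R$, I would first observe the standard isomorphism $R[S]/\mathbf{p}[S]\cong (R/\mathbf{p})[S]$; this comes from the universal property of the semigroup ring, or more concretely from the surjection $R[S]\to (R/\mathbf{p})[S]$ reducing coefficients, whose kernel is exactly $\mathbf{p}[S]$. Since $R/\mathbf{p}$ is a domain and $S$ is commutative, cancellative, and torsion-free, part (1) implies $(R/\mathbf{p})[S]$ is a domain, hence $\mathbf{p}[S]$ is prime.

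For (3), suppose $\mathbf{p}\in\Ass_R(M)$ with $\mathbf{p}=\Ann_R(m)$ for some $m\in M$. Viewing $m$ as the constant element $mX^{0_S}\in M[S]$ (where $0_S$ denotes the identity of the monoid), I want to show $\Ann_{R[S]}(mX^{0_S})=\mathbf{p}[S]$. The inclusion $\mathbf{p}[S]\subseteq\Ann_{R[S]}(mX^{0_S})$ is immediate. Conversely, for $f=\sum a_iX^{s_i}\in R[S]$, the product $f\cdot mX^{0_S}=\sum(a_im)X^{s_i}$ vanishes precisely when each $a_im=0$, i.e.\ when each $a_i\in\mathbf{p}$, which is exactly $f\in\mathbf{p}[S]$. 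Combined with (2), this yields $\mathbf{p}[S]\in\Ass_{R[S]}(M[S])$.

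The only subtle point, and the one I would double-check, is part (1): McCoy's property is stated for $f\in R[S]$ and $g\in M[S]-\{0\}$, so applying it with $M=R$ requires $R$ to be nonzero, which is automatic for a domain; otherwise the argument is formal. Everything else is bookkeeping, since (2) is a quotient-ring manipulation and (3) reduces to identifying an annihilator with a prime extended ideal.
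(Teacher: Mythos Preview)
Your argument is correct and is exactly the natural way to unpack this corollary from Theorem~2; the paper itself gives no explicit proof, leaving it as an immediate consequence. Your chain---McCoy's property for $M=R$ to get (1), the quotient isomorphism $R[S]/\mathbf{p}[S]\cong (R/\mathbf{p})[S]$ for (2), and the direct annihilator identification for (3)---is the standard route and there is nothing to add.
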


\begin{definition}
 Let $M$ be an $R$-module and $P$ be a proper $R$-submodule of $M$. $P$ is said to be a \textit{prime submodule} (\textit{primary submodule}) of $M$, if $rx \in P$ implies $x \in P$ or $rM \subseteq P$ (there exists a natural number $n$ such that $r^nM \subseteq P$), for each $r \in R$ and $x \in M$.
\end{definition}

\begin{corollary}
 Let $M$ be an $R$-module and $S$ be a commutative, cancellative and torsion-free monoid. Then the following statements hold:
\begin{enumerate}
 \item $(0)$ is a  prime (primary) submodule of $M$ if and only if $(0)$ is a prime (primary) submodule of $M[S]$.
 \item If $P$ is a prime (primary) submodule of $M$, then $P[S]$ is a prime (primary) submodule of $M[S]$.
\end{enumerate}

\end{corollary}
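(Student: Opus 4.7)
My plan is to reduce part (2) to part (1) via the canonical isomorphism of $R[S]$-modules $M[S]/P[S] \iso (M/P)[S]$ (which one checks termwise, noting that $P[S]$ is a proper submodule since $P \neq M$). Under this isomorphism $P$ being prime (resp.\ primary) in $M$ is exactly $(0)$ being prime (resp.\ primary) in $M/P$, and $P[S]$ being prime (resp.\ primary) in $M[S]$ is exactly $(0)$ being prime (resp.\ primary) in $(M/P)[S]$. So everything comes down to part~(1).

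For part~(1), the backward direction is the easy one: since $S$ is a monoid with identity $0_S$, the map $m \mapsto m X^{0_S}$ embeds $M$ in $M[S]$ and likewise $R \hookrightarrow R[S]$, so any prime/primary witness $rx = 0$ in $M$ is already a witness in $M[S]$, and the conclusions $x = 0$ or $rM = 0$ (resp.\ $r^nM = 0$) transfer directly.

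The forward direction is where McCoy's Property (Theorem~2(3)) does the work. Suppose $fg = 0$ in $M[S]$ with $g \neq 0$; write $f = \sum_{i=1}^{r} a_i X^{s_i}$ with distinct $s_i$. McCoy gives $m \in M \setminus \{0\}$ with $fm = 0$, and comparing the coefficients of the distinct $X^{s_i}$ yields $a_i m = 0$ for all $i$. In the prime case, $(0)$ prime in $M$ forces $a_i M = 0$ for each $i$, hence $f \cdot M[S] = 0$, as required. In the primary case, for each $i$ there exists $n_i$ with $a_i^{n_i} M = 0$; expanding $f^n$ by the multinomial theorem writes each coefficient as an $R$-combination of monomials $a_1^{k_1}\cdots a_r^{k_r}$ with $k_1+\cdots+k_r = n$, and as long as $n \geq (n_1 - 1) + \cdots + (n_r - 1) + 1$ a pigeonhole count guarantees some $k_i \geq n_i$, killing the corresponding factor on $M$. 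Hence $f^n \cdot M[S] = 0$ for this $n$, establishing that $(0)$ is primary in $M[S]$.

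The main obstacle is really just this combinatorial bookkeeping in the primary case: the exponent $n$ witnessing primariness for $f$ in $M[S]$ depends both on the exponents $n_i$ witnessing primariness for each coefficient $a_i$ in $M$ and on the number $r$ of terms appearing in $f$, and it is important that $r$ is finite so that the pigeonhole argument yields a finite $n$. Once the pigeonhole bound is in hand, everything else is formal.
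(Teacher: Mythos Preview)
Your argument is correct. The paper itself states this corollary without proof, treating it as an immediate consequence of McCoy's Property (Theorem~2) and the definition of prime/primary submodules; your write-up supplies exactly the details one would expect, including the reduction of~(2) to~(1) via $M[S]/P[S]\cong (M/P)[S]$ and the pigeonhole bound on the exponent in the primary case.
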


In \cite{D}, it has been defined that a ring $R$ has \textit{few zero-divisors}, if $Z(R)$ is a finite union of prime ideals. We give the following definition and prove some interesting results about zero-divisors of semigroup modules. Modules having (very) few zero-divisors, introduced in \cite{N1}, have also some interesting homological properties \cite{NP}.

 \begin{definition}
  An $R$-module $M$ has \textit{very few zero-divisors}, if $Z_R(M)$ is a finite union of prime ideals in $\Ass_R(M)$.
 \end{definition}

\begin{remark}
 \textit{Examples of modules having very few zero-divisors}. If $R$ is a Noetherian ring and $M$ is an $R$-module such that $\Ass_R(M)$ is finite, then obviously $M$ has very few zero-divisors. For example $\Ass_R(M)$ is finite if $M$ is a finitely generated $R$-module \cite[p. 55]{K}. Also if $R$ is a Noetherian quasi-local ring and $M$ is a balanced big Cohen-Macaulay $R$-module, then $\Ass_R(M)$ is finite \cite[Proposition 8.5.5, p. 344]{BH}.
\end{remark}

\begin{remark}
Let $R$ be a ring and consider the following three conditions on $R$:

\begin{enumerate}
 \item $R$ is a Noetherian ring.
 \item $R$ has very few zero-divisors.
 \item $R$ has few zero-divisors.
\end{enumerate}

Then, $(1) \rightarrow (2) \rightarrow (3)$ and none of the implications are reversible.
\end{remark}

\begin{proof}
For  $(1) \rightarrow(2) $ use \cite[p. 55]{K}. It is obvious that $(2) \rightarrow (3)$.

Suppose $k$ is a field, $A=k[X_1, X_2, X_3,\ldots,X_n,\ldots]$ and $\textbf{m} =(X_1, X_2, X_3,\ldots, X_n,\ldots)$ and at last $\textbf{a}=(X_1^2, X_2^2, X_3^2,\ldots, X_n^2,\ldots)$. Since $A$ is a domain, $A$ has very few zero-divisors while it is not a Noetherian ring. Also consider the ring $R=A/\textbf{a}$. It is easy to check that $R$ is a quasi-local ring with the only prime ideal $\textbf{m}/\textbf{a}$ and $Z(R)=\textbf{m}/\textbf{a}$ and finally $\textbf{m}/\textbf{a}\notin \Ass_R(R)$. Note that $\Ass_R(R)=\emptyset$ \cite{N1}.
\end{proof}

\begin{theorem}
 Let $M$ be an $R$-module and $S$ a commutative, cancellative and torsion-free monoid. Then the $R[S]$-module $M[S]$ has very few zero-divisors, if and only if the $R$-module $M$ has very few zero-divisors.
\end{theorem}

\begin{proof}
 $(\leftarrow)$: Let $Z_R(M) = {\textbf{p}_1}\cup {\textbf{p}_2}\cup \cdots \cup {\textbf{p}_n}$, where ${\textbf{p}_i} \in \Ass_R(M)$ for all $1 \leq i \leq n$. We will show that $Z_{R[S]}(M[S]) = {\textbf{p}_1}[S]\cup {\textbf{p}_2}[S]\cup \cdots \cup {\textbf{p}_n}[S]$. Let $f \in Z_{R[S]}(M[S])$, so there exists an $m \in M- \lbrace 0 \rbrace $ such that $f\cdot m = 0$ and so $c(f)\cdot m = (0)$. Therefore $c(f) \subseteq Z_R(M)$ and this means that $c(f) \subseteq {\textbf{p}_1}\cup {\textbf{p}_2}\cup \cdots \cup {\textbf{p}_n}$ and according to the Prime Avoidance Theorem, we have $c(f) \subseteq {\textbf{p}_i}$, for some $1 \leq i \leq n$ and therefore $f \in {\textbf{p}_i}[S]$. Now let $f \in {\textbf{p}_1}[S]\cup {\textbf{p}_2}[S]\cup \cdots \cup {\textbf{p}_n}[S]$, so there exists an $i$ such that $f \in {\textbf{p}_i}[S]$, so $c(f) \subseteq {\textbf{p}_i}$ and $c(f)$ has a nonzero annihilator in $M$ and this means that $f$ is a zero-divisor of $M[S]$. Note that by Corollary 3, ${\textbf{p}_i}[S] \in \Ass_{R[S]}(M[S])$ for all $1 \leq i \leq n$.

$(\rightarrow)$: Let $Z_{R[S]}(M[S])= \cup _{i=1}^n Q_i$, where $Q_i \in \Ass_{R[S]}(M[S])$ for all $1\leq i \leq n$. Therefore $Z_R(M) = \cup _{i=1}^n (Q_i \cap R)$. Without loss of generality, we can assume that $Q_i \cap R \nsubseteq Q_j \cap R$ for all $i \neq j$. Now we prove that $Q_i \cap R\in \Ass_R(M)$ for all $1 \leq i \leq n$. Consider $g\in M[S]$ such that $Q_i = \Ann (g)$ and $g = m_1X^{s_1}+m_2X^{s_2}+\cdots+m_nX^{s_n}$, where $m_1,\ldots,m_n \in M$ and $s_1,\ldots,s_n \in S$. It is easy to see that $Q_i \cap R = \Ann (c(g)) \subseteq \Ann(m_1) \subseteq Z_R(M)$ and by the Prime Avoidance Theorem, $Q_1 \cap R =\Ann(m_1)$.
\end{proof}

In \cite{HK}, it has been defined that a ring $R$ has \textit{Property (A)}, if each finitely generated ideal $I \subseteq Z(R)$ has a nonzero annihilator. We give the following definition:

\begin{definition}
 An $R$-module $M$ has \textit{Property (A)}, if each finitely generated ideal $I \subseteq Z_R(M)$ has a nonzero annihilator in $M$.
\end{definition}

\begin{remark}
 If the $R$-module $M$ has very few zero-divisors, then $M$ has Property (A).
\end{remark}

\begin{theorem}
 Let $S$ be a commutative, cancellative and torsion-free monoid and $M$ be an $R$-module. The following statements are equivalent:

\begin{enumerate}
 \item The $R$-module $M$ has Property (A).
 \item For all $f \in R[S]$, $f$ is $M[S]$-regular if and only if $c(f)$ is $M$-regular.
\end{enumerate}

\begin{proof}
 $(1) \rightarrow (2)$: Let the $R$-module $M$ have Property (A). If $f \in R[S]$ is $M[S]$-regular, then $f \cdot m \not= 0$ for all nonzero $m \in M$ and so $c(f)\cdot m \not= (0)$ for all nonzero $m \in M$ and according to the definition of Property (A), $c(f) \not\subseteq Z_R(M)$. This means that $c(f)$ is $M$-regular. Now let $c(f)$ be $M$-regular, so $c(f) \not\subseteq Z_R(M)$ and this means that $c(f)\cdot m \not= (0)$ for all nonzero $m \in M$ and hence $f\cdot m \not= 0$ for all nonzero $m \in M$. Since $S$ is a commutative, cancellative and torsion-free monoid, $f$ is not a zero-divisor of $M[S]$, i.e. $f$ is $M[S]$-regular.

$(2) \rightarrow (1)$: Let $I$ be a finitely generated ideal of $R$ such that $I \subseteq Z_R(M)$. Then there exists an $f \in R[S]$ such that $c(f) = I$. But $c(f)$ is not $M$-regular, therefore according to our assumption, $f$ is not $M[S]$-regular. Therefore there exists a nonzero $m \in M$ such that $f\cdot m = 0$ and this means that $I\cdot m = (0)$, i.e. $I$ has a nonzero annihilator in $M$.
\end{proof}

\end{theorem}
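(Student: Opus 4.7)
The plan is to exploit McCoy's property (condition (3) of Theorem 2), which translates $M[S]$-zero-divisors into pointwise annihilators in $M$; Property (A) then supplies the converse bridge for finitely generated contents.

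For $(1)\Rightarrow(2)$, I would split the biconditional into its two halves and argue each by contrapositive. If $c(f)$ fails to be $M$-regular, i.e.\ $c(f)\subseteq Z_R(M)$, then since $c(f)$ is finitely generated, Property (A) supplies a nonzero $m\in M$ annihilated by every element of $c(f)$; reading $m$ as a constant element of $M[S]$ this gives $f\cdot m=0$, so $f\in Z_{R[S]}(M[S])$. Conversely, if $f\in Z_{R[S]}(M[S])$, then McCoy's property produces a nonzero $m\in M$ with $f\cdot m=0$, hence $c(f)\cdot m=0$ and $c(f)\subseteq Z_R(M)$. Note that this second direction does not need Property (A) at all.

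For $(2)\Rightarrow(1)$, given a finitely generated ideal $I=(a_1,\dots,a_n)\subseteq Z_R(M)$, I would realize $I$ as a content: choose distinct $s_1,\dots,s_n\in S$ (harmless, since a torsion-free cancellative monoid with more than one element is automatically infinite) and set $f=\sum_{i=1}^{n} a_iX^{s_i}$, so that $c(f)=I$ is not $M$-regular. Hypothesis (2) then forces $f$ to be a zero-divisor on $M[S]$, and one more appeal to McCoy yields a nonzero $m\in M$ with $f\cdot m=0$, which is exactly $I\cdot m=(0)$. I anticipate no genuine obstacle here: Theorem 2 does the heavy lifting on both sides, and Property (A) is precisely the extra hypothesis needed to upgrade pointwise annihilation to annihilation of an arbitrary finitely generated ideal.
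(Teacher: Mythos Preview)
Your proposal is correct and follows essentially the same route as the paper: Property~(A) handles the implication ``$f$ regular $\Rightarrow$ $c(f)$ regular'' while McCoy's property handles ``$c(f)$ regular $\Rightarrow$ $f$ regular'', and for $(2)\Rightarrow(1)$ one realizes a given finitely generated $I$ as a content and reads off an annihilator. You are in fact slightly more careful than the paper on two points: you explicitly invoke McCoy again in $(2)\Rightarrow(1)$ to pass from a zero-divisor on $M[S]$ to an annihilating element $m\in M$ (the paper jumps straight to $m\in M$), and you flag the need for enough distinct elements in $S$ when building $f$ with $c(f)=I$.
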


Let, for the moment, $M$ be an $R$-module such that the set $Z_R(M)$ of zero-divisors of $M$ is a finite union of prime ideals. One can consider $Z_R(M)= \cup _{i=1}^n \textbf{p}_i$ such that $\textbf{p}_i \nsubseteq \cup _{j=1 \wedge j \neq i}^n \textbf{p}_j$ for all $ 1\leq i \leq n$. Obviously we have $\textbf{p}_i \nsubseteq \textbf{p}_j$ for all $i \neq j$. Also, it is easy to check that, if $Z_R(M)= \cup _{i=1}^n \textbf{p}_i$ and $Z_R(M)= \cup _{k=1}^m \textbf{q}_k$ such that $\textbf{p}_i \nsubseteq \textbf{p}_j$ for all $i \neq j$ and $\textbf{q}_k \nsubseteq \textbf{q}_l$ for all $k \neq l$, then $m=n$ and $\{\textbf{p}_1,\ldots,\textbf{p}_n\}=\{\textbf{q}_1,\ldots,\textbf{q}_n\}$, i.e. these prime ideals are uniquely determined (Use the Prime Avoidance Theorem). This is the base for the following definition:

\begin{definition}
 An $R$-module $M$ is said to have \textit{few zero-divisors of size} $n$, if $Z_R(M)$ is a finite union of $n$ prime ideals $\textbf{p}_1,\ldots,\textbf{p}_n$ of $R$ such that $\textbf{p}_i \nsubseteq \textbf{p}_j$ for all $i \neq j$.
\end{definition}

\begin{theorem}
 Let $M$ be an $R$-module and $S$ a commutative, cancellative and torsion-free monoid. Then the $R[S]$-module $M[S]$ has few zero-divisors of size $n$, if and only if the $R$-module $M$ has few zero-divisors of size $n$ and Property (A).
\end{theorem}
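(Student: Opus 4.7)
The plan is to prove the two implications separately, using Theorem 11 and prime avoidance to shuttle zero-divisor information between $Z_R(M)$ and $Z_{R[S]}(M[S])$.

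For the $(\Leftarrow)$ direction, assume $Z_R(M) = \textbf{p}_1 \cup \cdots \cup \textbf{p}_n$ with pairwise incomparable primes and $M$ has Property (A). Following the template of Theorem 6, I would show $Z_{R[S]}(M[S]) = \textbf{p}_1[S] \cup \cdots \cup \textbf{p}_n[S]$: given $f \in R[S]$, Theorem 11 yields $f \in Z_{R[S]}(M[S])$ iff $c(f) \subseteq Z_R(M)$; since $c(f)$ is finitely generated, prime avoidance gives $c(f) \subseteq \textbf{p}_i$ for some $i$, equivalently $f \in \textbf{p}_i[S]$. Corollary 3 makes each $\textbf{p}_i[S]$ prime, and they are pairwise incomparable because $\textbf{p}_i[S] \subseteq \textbf{p}_j[S]$ contracts to $\textbf{p}_i \subseteq \textbf{p}_j$.

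For the $(\Rightarrow)$ direction, assume $Z_{R[S]}(M[S]) = \textbf{q}_1 \cup \cdots \cup \textbf{q}_n$ with pairwise incomparable primes and set $\textbf{p}_i := \textbf{q}_i \cap R$. The inclusion $M \subseteq M[S]$ via constants gives $Z_R(M) = Z_{R[S]}(M[S]) \cap R$, hence $Z_R(M) = \textbf{p}_1 \cup \cdots \cup \textbf{p}_n$. The crucial step is to verify Property (A): given a finitely generated $I = (a_1, \ldots, a_k) \subseteq Z_R(M)$, I would choose distinct exponents $s_1, \ldots, s_k \in S$ (which exist whenever $S$ is nontrivial, since any finite cancellative torsion-free commutative monoid is trivial) and form $f = \sum_{i=1}^{k} a_i X^{s_i} \in R[S]$ with $c(f) = I$; prime avoidance applied to $I \subseteq \bigcup \textbf{p}_i$ yields $I \subseteq \textbf{p}_j$ for some $j$, and since $\textbf{p}_j \subseteq \textbf{q}_j$ with $\textbf{q}_j$ an $R[S]$-ideal, we get $f \in \textbf{p}_j[S] \subseteq \textbf{q}_j \subseteq Z_{R[S]}(M[S])$; McCoy's Property (Theorem 2) then produces a nonzero $m \in M$ with $fm = 0$, so $Im = 0$.

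With Property (A) established, the $(\Leftarrow)$ argument applies in reverse to give $Z_{R[S]}(M[S]) = \textbf{p}_1[S] \cup \cdots \cup \textbf{p}_n[S]$; after discarding any duplicate or redundant primes one obtains an irredundant incomparable representation with $m \leq n$ primes, and by the uniqueness of such a representation (noted just before Definition 10) this reduced form must coincide with $\{\textbf{q}_1, \ldots, \textbf{q}_n\}$, forcing $m = n$. Hence the $\textbf{p}_i$ are already distinct and pairwise incomparable, proving $M$ has few zero-divisors of degree $n$. The main obstacle I anticipate is this Property (A) step: one must realize an arbitrary finitely generated ideal of $R$ as the content of a single element of $R[S]$ (requiring distinct exponents in $S$) and then combine prime avoidance in $R$ with the inclusion $\textbf{p}_j[S] \subseteq \textbf{q}_j$ to transfer a zero-divisor of $M[S]$ back into an annihilator of $I$ in $M$ via McCoy's Property.
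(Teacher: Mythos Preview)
Your argument is correct and follows essentially the same route as the paper's own proof: the $(\Leftarrow)$ direction is the Property (A)/prime-avoidance computation of $Z_{R[S]}(M[S])=\bigcup \textbf{p}_i[S]$ (the paper just cites ``similar to the proof of Theorem 9''), and the $(\Rightarrow)$ direction contracts the $Q_i$ to $R$, realizes a finitely generated $I\subseteq Z_R(M)$ as a content to invoke McCoy, and then recovers the exact degree $n$ via the $(\Leftarrow)$ description together with uniqueness of irredundant prime covers. Your write-up is in fact more careful than the paper's on two points it leaves implicit (the need for enough distinct exponents in $S$, and the prime-avoidance step hidden in the paper's ``obviously $f\in Z_{R[S]}(M[S])$''); just note that your internal references are off by the paper's numbering (your ``Theorem 11'' is Theorem 12, your ``Theorem 6'' is Theorem 9, and the uniqueness remark precedes Definition 13).
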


\begin{proof}

$(\leftarrow)$: By considering the $R$-module $M$ having Property (A), similar to the proof of Theorem 9, we have if $Z_R(M)= \cup _{i=1}^n \textbf{p}_i$, then $Z_{R[S]}(M[S])= \cup _{i=1}^n \textbf{p}_i[S]$. Also it is obvious that $\textbf{p}_i[S] \subseteq \textbf{p}_j[S]$ if and only if $\textbf{p}_i \subseteq \textbf{p}_j$ for all $1 \leq i,j \leq n$. These two imply that the $R[S]$-module $M[S]$ has few zero-divisors of size $n$.

$(\rightarrow)$: Note that $Z_R(M) \subseteq Z_{R[S]}(M[S])$. It is easy to check that if $Z_{R[S]}(M[S])= \cup _{i=1}^n Q_i$, where $Q_i$ are prime ideals of $R[S]$ for all $1\leq i \leq n$, then $Z_R(M) = \cup _{i=1}^n (Q_i \cap R)$. Now we prove that the $R$-module $M$ has Property (A). Let $I \subseteq Z_R(M)$ be a finite ideal of $R$. Choose $f\in R[S]$ such that $I=c(f)$. So $c(f) \subseteq Z_R(M)$ and obviously $f \in Z_{R[S]}(M[S])$ and according to McCoy's property, there exists a nonzero $m\in M$ such that $f\cdot m=0$. This means that $I\cdot m=0$ and $I$ has a nonzero annihilator in $M$. Consider that by a similar discussion in $(\leftarrow)$, the $R$-module $M$ has few zero-divisors obviously not less than size $n$ and this completes the proof.
\end{proof}

An $R$-module $M$ is said to be \textit{primal}, if $Z_R(M)$ is an ideal of $R$ \cite{D}. It is easy to check that if $Z_R(M)$ is an ideal of $R$, then it is a prime ideal and therefore the $R$-module $M$ is primal if and only if $M$ has few zero-divisors of size one.

\begin{corollary}
 Let $M$ be an $R$-module and $S$ a commutative, cancellative and torsion-free monoid. Then the $R[S]$-module $M[S]$ is primal, if and only if the $R$-module $M$ is primal and has Property (A).
\end{corollary}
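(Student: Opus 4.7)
The plan is to obtain this corollary as the specialization $n=1$ of the preceding theorem on few zero-divisors of degree $n$. The bridge I would use is the observation already made in the paragraph just before the statement: an $R$-module $N$ is primal if and only if $N$ has few zero-divisors of degree one. This equivalence rests on the fact that whenever $Z_R(N)$ is an ideal it is automatically prime, which one verifies by the standard dichotomy: if $rs \in Z_R(N)$ with witness $m \neq 0$, then either $sm = 0$ (so $s \in Z_R(N)$) or $sm \neq 0$, in which case $r(sm) = 0$ forces $r \in Z_R(N)$. Note that for $n=1$ the condition ``$\textbf{p}_i \nsubseteq \textbf{p}_j$ for all $i \neq j$'' in the definition of few zero-divisors of degree $n$ is vacuous, so the translation is clean.

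With this translation in hand, the proof becomes a direct specialization. For the forward direction, I would assume $M[S]$ is primal, so $M[S]$ has few zero-divisors of degree one, and then apply the preceding theorem with $n=1$ to conclude that $M$ has few zero-divisors of degree one (hence is primal) and that $M$ has Property (A). For the converse, assuming $M$ is primal (hence has few zero-divisors of degree one) and has Property (A), the $(\leftarrow)$ direction of the preceding theorem yields few zero-divisors of degree one for $M[S]$, which is exactly primality of $M[S]$.

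There is essentially no obstacle to overcome here: the preceding theorem does all the work, and the only substantive ingredient is the preparatory observation that a set of zero-divisors which happens to form an ideal is automatically prime. Since that observation is already recorded in the prose before the statement, the corollary is, in the literal sense, immediate.
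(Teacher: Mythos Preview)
Your proposal is correct and matches the paper's approach exactly: the corollary is stated without proof in the paper, and the preceding paragraph establishes precisely the equivalence (primal $\Leftrightarrow$ few zero-divisors of degree one) that lets you read it off as the case $n=1$ of the theorem. There is nothing to add.
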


\section{Acknowledgment} The author wishes to thank Prof. Winfried Bruns and the referee for their useful advice and Dr. Neil Epstein for his editorial comments.


\begin{thebibliography}{15.}

\bibitem{AG} J. T. Arnold and R. Gilmer, {\em On the content of polynomials}, Proc. Amer. Math. Soc. {\bf 40} (1970), 556--562.

\bibitem{AK} D. D. Anderson and B. G. Kang, {\em Content formulas for polynomials and power series and complete integral closure}, J. Algebra, {\bf 181} (1996), 82--94.

\bibitem{BG} W. Bruns and A. Guerrieri, {\em The Dedekind-Mertens formula and determinantal rings}, Proc.
Amer. Math. Soc. {\bf127} (1999), no. 3, 657--663.

\bibitem{BH} W. Bruns and J. Herzog, {\em Cohen-Macaulay Rings}, revised edn., Cambridge, 1998.

\bibitem{D} J. Dauns, {\em Primal modules}, Comm. Algebra {\bf25} (1997), no. 8, 2409--2435.

\bibitem{G1} R. Gilmer, {\em Multiplicative Ideal Theory}, Marcel Dekker, New York, 1972.

\bibitem{G2} R. Gilmer, {\em Commutative Semigroup Rings}, The University of Chicago Press, 1984.

\bibitem{GGP} R. Gilmer, A. Grams and T. Parker, {\em Zero divisors in power series rings}, J. Reine Angew. Math.  {\bf278} (1975), 145--164.

\bibitem{HH} W. Heinzer and C. Huneke, {\em The Dedekind-Mertens Lemma and the content of polynomials}, Proc.
Amer. Math. Soc. {\bf126} (1998), 1305--1309.

\bibitem{H} J. A. Huckaba, {\em Commutative Rings with Zero Divisors}, Marcel Dekker, 1988.

\bibitem{HK} J. A. Huckaba and J. M. Keller, {\em Annihilation of ideals in commutative rings}, Pac. J. Math. {\bf 83} (1979), 375--379.

\bibitem{J} B. D. Janeway, {\em Zero divisors in commutative semigroup rings}, Comm. Algebra {\bf 12} (1984), 1877--1887.

\bibitem{K} I. Kaplansky, {\em Commutative Rings}, Allyn and Bacon, Boston, 1970.

\bibitem{M} N. H. McCoy, {\em Remarks on divisors of zero}, Amer. Math. Monthly, {\bf 49} (1942), 286--295.

\bibitem{N1} P. Nasehpour, {\em Content algebras over commutative rings with zero divisors}, arXiv:0807.1835v4, preprint.

\bibitem{N2} D. G. Northcott, {\em A generalization of a theorem on the content of polynomials}, Proc. Cambridge Phil. Soc. {\bf 55} (1959), 282--288.

\bibitem{NP} P. Nasehpour and Sh. Payrovi, {\em Modules having very few zero-divisors}, Comm. Algebra., {\bf 38} (9) (2010), 3154--3162.

\bibitem{NY} P. Nasehpour and S. Yassemi, {\em $M$-cancellation Ideals}, Kyungpook Math. J., {\bf 40} (2000), 259--263.

\bibitem{OR} J. Ohm and D. E. Rush, {\em Content modules and algebras}, Math. Scand. {\bf 31} (1972), 49--68.

\bibitem{R} D. E. Rush, {\em Content algebras}, Canad. Math. Bull. Vol. {\bf 21} (3) (1978), 329--334.

\bibitem{T} H. Tsang, {\em Gauss' lemma}, dissertation, University of Chicago, Chicago, 1965.

\bibitem{ZS} O. Zariski and P. Samuel, {\em Commutative Algebra}, Vol. I, Van Nostrand, New York, 1958.

\end{thebibliography}
\end{document}